\documentclass[ECP, preprint]{ejpecp} 

\usepackage[T1]{fontenc}
\usepackage[utf8]{inputenc}
\usepackage{latexsym,amssymb,amsfonts}
\usepackage{mathtools}
\usepackage{enumitem}

\newcommand{\field}[1]{\mathbb{#1}}
\newcommand{\R}{\field{R}}

\newcommand{\1}{\mathbb{1}}

\newcommand{\Exp}{\operatorname{\mathbb E}}

\newcommand{\Tr}{\operatorname{Tr}}

\newcommand{\norm}[1]{\lVert #1 \rVert}

\DeclareMathOperator{\osc}{osc}

\providecommand{\orcid}[1]{}

\SHORTTITLE{Fluctuations in Averaging-Learning}

\TITLE{Rank-One Fluctuations in Averaging-Learning Dynamics}

\AUTHORS{
  Ionel Popescu\footnote{University of Bucharest.
    \EMAIL{ionel.popescu@fmi.unibuc.ro}}\orcid{0000-0001-5150-5568}
  \and
  Tushar Vaidya\footnote{Nanyang Technological University.
    \EMAIL{tushar.vaidya@ntu.edu.sg}}\orcid{0000-0002-2264-2595}
}

\KEYWORDS{averaging-learning; asymptotic agreement; time-varying stochastic matrices; Dobrushin coefficient; central limit theorem}

\AMSSUBJ{60F05; 39A30; 15B51}

\SUBMITTED{}
\ACCEPTED{}

\VOLUME{0}
\YEAR{2026}
\PAPERNUM{0}
\DOI{10.1214/YY-TN}

\ABSTRACT{We study averaging-learning dynamics without an exogenous ground truth:
the reference signal is generated endogenously by the population.  The dynamics
combine a time-varying averaging matrix, a learning-source matrix, and a learning
matrix $\mathcal{E}_t$, typically diagonal.  Dobrushin-type contraction controls
the decay of oscillations and yields asymptotic agreement.  Under a summability
condition the backward products $B_{t:s}$ converge exponentially to rank-one
limits $F_s=\1\nu_s$. With summable perturbations, the process converges to a random consensus state.  For i.i.d. perturbations we prove a central
limit theorem for the centered process: the limiting Gaussian law is supported
on the agreement direction.  Thus, despite the multi-agent dynamics, the
long-time fluctuations are asymptotically one-dimensional. We also record a pairwise Dobrushin formulation which clarifies the dynamic agreement-class geometry underlying the one-class regime.}

\begin{document}

\section{Introduction}

Averaging-plus-learning models update the opinion of agent $i$ by
\begin{equation}
x^i_{t+1}
=
\sum_{j=1}^n a_{ij}(t)x^j_t
+
\varepsilon_{i,t}(\bar\sigma-x^i_t),
\end{equation}
where $A_t=(a_{ij}(t))$ is row-stochastic and $\bar\sigma$ is an exogenous truth
signal.  This is the averaging-plus-learning setup of
\cite{popescu2023averaging, popescu2026mixednorms}.  In the present paper, the reference signal is not
exogenous.  It is produced endogenously by the population.  The work here sits
within the broad class of repeated averaging and opinion dynamics models
originating with DeGroot \cite{degroot1974reaching}; see also \cite{ChatterjeeSeneta1977Consensus} for classical convergence
results and \cite{AAAP2025} for robust naive learning. Let
\[
        X_t=(x^1_t,\ldots,x^n_t)^\top\in\R^n .
\]
The matrix $A_t$ describes averaging among agents.  The row-stochastic matrix
$\Delta_t$ describes the agents from whom each player learns.  The learning
intensity matrix is denoted throughout by
\[
        \mathcal{E}_t
        =
        \operatorname{diag}(\varepsilon_{1,t},\ldots,\varepsilon_{n,t}),
\]
or, more generally, by a deterministic matrix with uniformly bounded
$\ell^\infty$ norm.  In the full-information case one may take
\[
        \Delta_t=n^{-1}J,
        \qquad
        J=\1\1^\top \text{ with }\1=(1,1,\dots, 1)^\top.
\]
We study the recursion
\begin{equation}\label{eq:AVM2}
X_t
=
A_tX_{t-1}
+
\mathcal{E}_t\bigl(\Delta_t X_{t-1}-X_{t-1}+\gamma_t\bigr),
\qquad t\ge1,
\end{equation}
where $\gamma_t\in\R^n$ is the \emph{perturbation}.  The central object of the paper is the matrix
\begin{equation}\label{eq:def-Bt}
  B_t:=A_t+\mathcal{E}_t(\Delta_t-I),
\end{equation}
 and the recursion becomes
\begin{equation}\label{eq:linear-recursion}
        X_t=B_tX_{t-1}+\mathcal{E}_t\gamma_t .
\end{equation}

Since $A_t\1=\1$ and $\Delta_t\1=\1$, each $B_t$ satisfies
\[
        B_t\1=\1 .
\]
Thus the agreement direction is invariant.  We use the word \emph{agreement}
for the decay of differences among agents, allowing the common value to move in
time.  We reserve \emph{consensus} for convergence to a time-independent common
value.  Throughout the paper the matrices are deterministic; randomness enters only through the perturbations. The basic mechanism is Dobrushin contraction of oscillations.  The oscillation
\[
        \osc(x)=\max_{i,j}|x_i-x_j|
\]
measures disagreement, and the Dobrushin coefficient of a matrix with equal row sums measures how strongly the matrix contracts this disagreement.  Decay of the corresponding backward-product contraction factors yields agreement for the homogeneous recursion. Under a summability condition, the backward products have exponential rank-one limits: denoting $B_{t:s}=B_tB_{t-1}\dots B_s$ we write
\[
        B_{t:s}\xrightarrow[t\to\infty]{} F_s:=\1\nu_s .
\] 
With summable perturbations, the process converges to a random consensus state. The main probabilistic result is a central limit theorem for the centered process under i.i.d. perturbations.  Disagreement is negligible at scale $\sqrt t$.  What remains is a fluctuation in the agreement direction, and the limiting Gaussian law is rank one.  The mechanism is a single scalar projection which removes the disagreement directions and leaves one diffusive coordinate.

The same estimates also have a finer interpretation.  Instead of taking the
maximum over all row pairs in the Dobrushin coefficient, one may keep the
individual row-pair distances of the backward products.  This pairwise geometry
gives a quantitative way to define dynamically generated agreement classes.
Under the hypotheses of the present CLT all these classes merge into one,
producing the rank-one covariance; beyond the global maximum condition one is
led naturally to a block-consensus picture.

Dobrushin's theorem and the martingale proof of Sethuraman and Varadhan
\cite{Dobrushin1956NonstationaryI,Dobrushin1956NonstationaryII,
SethuramanVaradhan2005Dobrushin} are important precursors for contraction-based
Gaussian limits in time-inhomogeneous systems. The present result is different
in both setting and conclusion. It is formulated for endogenous
averaging-learning dynamics, and the limiting fluctuations collapse onto the agreement direction. Unlike the non-homogeneous Markov-chain CLT, the randomness here is injected into an endogenous averaging-learning recursion, while Dobrushin contraction acts on
the agents' state (belief) vector by eliminating transverse disagreement directions. What remains at diffusive scale is a single time-dependent agreement coordinate. We first isolate the Dobrushin estimates that control disagreement throughout the paper.

\section{Oscillation and Dobrushin contraction}

For $x\in\R^n$ define
\[
        \osc(x):=\max_{1\le i,j\le n}|x_i-x_j|.
\]
Let
\[
        H:=\{x\in\R^n:\1^\top x=0\}
\]
be the disagreement subspace.  Every $x\in\R^n$ has the decomposition
\begin{equation}
        x=\bar x\,\1+x^\perp,
        \qquad
        \bar x:=\frac1n\1^\top x,
        \qquad
        x^\perp:=x-\bar x\,\1\in H.
\end{equation}
For a matrix $M=(m_{ij})$ with equal row sums, its Dobrushin coefficient is
\begin{equation}
        \delta(M)
        :=
        \frac12\max_{1\le i,j\le n}
        \sum_{k=1}^n |m_{ik}-m_{jk}|.
\end{equation}
All matrix norms are induced by $\ell^\infty$, so
\[
        \norm{M}_\infty=\max_i\sum_j |m_{ij}|,
\]
unless another norm is explicitly displayed.

\begin{lemma}\label{lem:dobrushin}
Let $x\in H$ and let $M,N$ be matrices whose row sums are constant.
\begin{enumerate}[label=(\roman*)]
\item $\norm{x}_\infty\le \osc(x)\le2\norm{x}_\infty$.
\item $\osc(My)\le\delta(M)\osc(y)$ for every $y\in\R^n$.
\item $\delta(MN)\le\delta(M)\delta(N)$.
\end{enumerate}
If, in addition, $M$ is stochastic, then $\delta(M)\le1$.
\end{lemma}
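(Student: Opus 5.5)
The plan is to prove the four assertions in turn, using only the definitions of $\osc$, $\delta$ and the constant-row-sum hypothesis. Signs of the entries are not assumed: the matrices $B_t$ of \eqref{eq:def-Bt} may have negative entries, so the proof must not use positivity except in the final stochastic claim. For (i), fix $x\in H$. Since $\1^\top x=0$, we have $\max_k x_k\ge 0\ge\min_k x_k$. Hence every $|x_i|\le \max_k x_k-\min_k x_k=\osc(x)$. The upper bound $\osc(x)\le 2\norm{x}_\infty$ follows from the triangle inequality.

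For (ii), fix rows $i,j$ and set $w_k=m_{ik}-m_{jk}$. Because the row sums of $M$ are equal, $\sum_k w_k=0$. Therefore, for any constant $c$,
\[
(My)_i-(My)_j=\sum_k w_k y_k=\sum_k w_k (y_k-c).
\]
I choose $c=\tfrac12(\max_k y_k+\min_k y_k)$, which gives $|y_k-c|\le\tfrac12\osc(y)$ for every $k$. Then
\[
|(My)_i-(My)_j|\le \tfrac12\sum_k|w_k|\,\osc(y)\le \delta(M)\osc(y).
\]
Maximizing over $i,j$ gives (ii).

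For (iii), I would first show that the bound in (ii) is sharp, so that
\[
\delta(M)=\sup\{\osc(My):\osc(y)\le 1\}.
\]
Given the rows $i,j$ attaining the maximum in $\delta(M)$, take the test vector $y_k=\1\{w_k>0\}$. Then $\osc(y)\le1$, and $\sum_k w_k y_k=\sum_k w_k^+=\tfrac12\sum_k|w_k|$, using $\sum_k w_k=0$. So $\delta(M)$ is exactly the operator "norm" of $M$ with respect to the seminorm $\osc$.

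Next, $MN$ again has constant row sums, since $MN\1=M(c_N\1)=c_Mc_N\1$. Applying (ii) twice gives $\osc(MNy)\le\delta(M)\osc(Ny)\le\delta(M)\delta(N)\osc(y)$. Taking the supremum over $\osc(y)\le1$ and invoking the sharpness statement yields $\delta(MN)\le\delta(M)\delta(N)$. The only delicate point in the lemma is this sharpness step, and in particular the use of $\sum_k w_k=0$ to identify $\sum_k w_k^+$ with half the $\ell^1$ norm. A direct alternative is to decompose $w=w^+-w^-$ into two measures of equal mass and couple them, but the duality route is shorter.

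Finally, if $M$ is stochastic, then $m_{ik},m_{jk}\ge0$ and each row sums to $1$. Hence $\sum_k|m_{ik}-m_{jk}|\le\sum_k(m_{ik}+m_{jk})=2$, which gives $\delta(M)\le1$.
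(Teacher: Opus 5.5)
Your proof is correct and follows the paper's argument essentially step for step: the same sign argument for (i), the same midpoint centering for (ii), and the same sharpness identity $\delta(P)=\sup_{\osc(y)\le1}\osc(Py)$ applied twice for (iii). The differences are cosmetic. You use the indicator of $\{w_k>0\}$ as the test vector, whereas the paper uses $y_k=\tfrac12\operatorname{sign}(w_k)$, which does not need $\sum_k w_k=0$. You also check explicitly that $MN$ has constant row sums, which the paper leaves implicit.
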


\begin{proof}
For (i), reorder the coordinates so that $x_1\le\cdots\le x_n$.  Since
$x\in H$, $x_1\le0\le x_n$, hence
\[
        \norm{x}_\infty
        =
        \max\{-x_1,x_n\}
        \le
        x_n-x_1
        =
        \osc(x)
        \le
        2\norm{x}_\infty .
\]

For (ii), write
\[
        y_k=\frac{y_{\min}+y_{\max}}2+z_k,
        \qquad
        |z_k|\le\frac12\osc(y).
\]
Equal row sums cancel the constant part, so
\[
        (My)_i-(My)_j
        =
        \sum_{k=1}^n(m_{ik}-m_{jk})z_k .
\]
Taking absolute values and maximizing over $i,j$ yields $\osc(My)\le\delta(M)\osc(y)$. Moreover, for any matrix $P$ with constant row sums,
\[
        \delta(P)
        =
        \sup_{\osc(y)\le1}\osc(Py).
\]
Indeed, the inequality ``$\le$'' follows from the preceding argument, while the
reverse inequality is obtained by fixing a row pair $i,j$ and taking
$y_k=\frac12\operatorname{sign}(p_{ik}-p_{jk})$.
Applying this twice gives (iii):
\[
        \osc(MNy)
        \le
        \delta(M)\osc(Ny)
        \le
        \delta(M)\delta(N)\osc(y).
\]
If $M$ is stochastic, its rows are probability vectors, and their total
variation distance is at most one.
\end{proof}
For integers $t\ge s$ write
\begin{equation}\label{eq:products}
        D_{t,s}:=\prod_{u=s}^t\delta(B_u),
        \qquad
        B_{t:s}:=B_tB_{t-1}\cdots B_s,
\end{equation}
with $B_{t:s}=I$ and $D_{t,s}=1$ when $t<s$.  The main summability hypothesis
used below is
\begin{equation}\label{eq:mainhyp}
        A_*:=\sup_{t\ge1}\sum_{k=1}^tD_{t,k}<\infty,
        \qquad
        L:=\sup_{t\ge1}\norm{B_t}_\infty<\infty,
        \qquad
        K:=\sup_{t\ge1}\norm{\mathcal{E}_t}_\infty<\infty .
\end{equation}
By \cite[Lemma 17]{popescu2023averaging}, the first condition in
\eqref{eq:mainhyp} implies exponential decay: there are constants $c,C>0$,
depending only on $A_*$, such that
\begin{equation}\label{eq:expdecay}
        D_{t,s+1}\le Ce^{-c(t-s)},
        \qquad t\ge s\ge0 .
\end{equation}
Since $\delta(B_s)\le\norm{B_s}_\infty\le L$, the same estimate, with a
different constant $C$, also controls $D_{t,s}$ for $t\ge s$.
\section{Agreement, consensus, and backward-product limits}\label{sec:Agreementconsenus}
We first treat the deterministic recursion
\[
        X_t=B_tX_{t-1}.
\]

\begin{theorem}[Consensus convergence]\label{thm:deterministic}
Assume $\gamma_t\equiv0$.
\begin{enumerate}[label=(\arabic*)]
\item If $D_{t,1}\to0$, then $\osc(X_t)\to0$.
\item If either
\begin{enumerate}[label=(\alph*)]
\item each $B_t$ is stochastic and $D_{t,1}\to0$, or
\item $\sup_t\norm{B_t}_\infty<\infty$ and
      $\sum_{t\ge1}D_{t,1}<\infty$,
\end{enumerate}
then $X_t\to\lambda\1$ for some $\lambda\in\R$.
\end{enumerate}
\end{theorem}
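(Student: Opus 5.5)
Part (1) follows directly from Lemma~\ref{lem:dobrushin}. Since $X_t=B_{t:1}X_0$, applying part (ii) of the lemma repeatedly gives
\[
\osc(X_t)\le \delta(B_t)\osc(X_{t-1})\le\cdots\le D_{t,1}\osc(X_0),
\]
and the right-hand side tends to $0$ by hypothesis. For part (2) the plan is to show that the componentwise extremes, or a suitable scalar coordinate, converge. Combined with $\osc(X_t)\to0$, this gives $X_t\to\lambda\1$.

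In case (a), each $B_t$ is stochastic, so every coordinate of $X_t$ is a convex combination of the coordinates of $X_{t-1}$. Setting $m_t=\min_i x^i_t$ and $M_t=\max_i x^i_t$, we get $m_{t-1}\le m_t\le M_t\le M_{t-1}$. Thus $m_t$ is nondecreasing and $M_t$ is nonincreasing, and both are bounded by the initial range. Both sequences therefore converge. Their difference is $M_t-m_t=\osc(X_t)\to0$ by part (1), so they share a common limit $\lambda$. Since $m_t\le x^i_t\le M_t$ for every $i$, it follows that $X_t\to\lambda\1$.

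In case (b), the coordinates need not be monotone, because $B_t$ may have negative entries. The approach is a Cauchy argument on the agreement coordinate. Write $X_{t-1}=c\1+y$ with $c=x^1_{t-1}$, so that $\norm{y}_\infty\le\osc(X_{t-1})$. Using $B_t\1=\1$,
\[
X_t-X_{t-1}=B_t(c\1+y)-(c\1+y)=B_ty-y,
\]
and hence
\[
\norm{X_t-X_{t-1}}_\infty\le (L+1)\osc(X_{t-1})\le (L+1)D_{t-1,1}\osc(X_0).
\]
The hypothesis $\sum_t D_{t,1}<\infty$ makes the increments summable, so $X_t$ is Cauchy in $\R^n$ and converges to some limit $X_\infty$. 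By part (1), $\osc(X_\infty)=\lim_t\osc(X_t)=0$, so $X_\infty=\lambda\1$.

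The main obstacle is case (b), where stochasticity fails and the bounds by convex combinations are unavailable. The key step there is to center $X_{t-1}$ on a point of the agreement line before applying $B_t-I$. This centering turns the one-step increment into a quantity controlled by the oscillation, and the summability assumption on $D_{t,1}$ exists precisely to sum these increments. One small check is needed: $D_{t,1}\to0$ must hold in (b) as well. This is immediate, since the terms of a convergent series tend to zero.
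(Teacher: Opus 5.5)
Your proof is correct and follows the same route as the paper. Part (1) iterates Lemma~\ref{lem:dobrushin}(ii), case (a) uses the monotone minima and maxima, and case (b) is a Cauchy argument: $B_t\1=\1$ reduces each increment to the disagreement part, which is bounded by a constant times the oscillation, and these bounds are summed using $\sum_t D_{t,1}<\infty$. The only cosmetic difference is that you center at the coordinate $x^1_{t-1}$ and show that $X_t$ itself is Cauchy, whereas the paper splits $X_t=m_t\1+R_t$ with $R_t\in H$, shows the scalar $m_t$ is Cauchy, and shows $R_t\to0$.
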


\begin{proof}
\begin{enumerate}
\item By Lemma~\ref{lem:dobrushin}(ii),
\begin{equation}\label{eq:det-osc-bound}
        \osc(X_t)\le D_{t,1}\osc(X_0),
\end{equation}
which proves the first assertion.

\item If every $B_t$ is stochastic, then $X_t=B_tX_{t-1}$ is obtained by taking convex
combinations of the coordinates of $X_{t-1}$, hence
\[
        \min_i(X_{t-1})_i
        \le
        \min_i(X_t)_i
        \le
        \max_i(X_t)_i
        \le
        \max_i(X_{t-1})_i .
\]
The minima increase, the maxima decrease, and their gap is
$\osc(X_t)\to0$.  The two limits coincide, so $X_t\to\lambda\1$.

Now assume (b).  Decompose
\[
        X_t=m_t\1+R_t,
        \qquad
        R_t\in H .
\]
Since $B_t\1=\1$,
\begin{equation}
        (m_t-m_{t-1})\1=B_tR_{t-1}-R_t .
\end{equation}
Using Lemma~\ref{lem:dobrushin}(i) and \eqref{eq:det-osc-bound},
\[
\begin{aligned}
        |m_t-m_{t-1}|
        &\le
        L\norm{R_{t-1}}_\infty+\norm{R_t}_\infty      \\
        &\le
        L\osc(X_{t-1})+\osc(X_t)                      \\
        &\le
        C(D_{t-1,1}+D_{t,1}) .
\end{aligned}
\]
The right-hand side is summable.  Hence $(m_t)$ is Cauchy, while
$\norm{R_t}_\infty\le\osc(X_t)\to0$.  Therefore $X_t\to\lambda\1$.\qedhere
\end{enumerate}
\end{proof}

The next proposition identifies the limiting backward products.  We denote these
limits by $F_s$.

\begin{proposition}[Rank-one backward-product limit]\label{prop:rankone}
Assume \eqref{eq:mainhyp}.  For each $s\ge1$ there is a row vector $\nu_s$ with
$\nu_s\1=1$ such that, with
\[
        F_s:=\1\nu_s,
\]
one has
\begin{equation}\label{eq:rank-one-limit}
        \norm{B_{t:s}-F_s}_\infty
        \le
        Ce^{-c(t-s)},
        \qquad t\ge s .
\end{equation}
Moreover,
\begin{equation}\label{eq:nu-consistency}
        F_s=F_{s+1}B_s,
        \qquad
        \nu_s=\nu_{s+1}B_s,
        \qquad s\ge1 .
\end{equation}
\end{proposition}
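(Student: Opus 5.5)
We will show that $(B_{t:s})_{t\ge s}$ is Cauchy in $\norm{\cdot}_\infty$ with exponentially decaying increments, and that every column of the limit is constant. Fix $s$ and a column index $k$, and apply the oscillation bounds to the columns $y^{(t)}:=B_{t:s}e_k$.

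\emph{Column oscillations.} By Lemma~\ref{lem:dobrushin}(ii),(iii),
\[
\osc(B_{t:s}e_k)\le D_{t,s}\osc(e_k)=D_{t,s}.
\]
By \eqref{eq:expdecay} and the remark after it, this is at most $Ce^{-c(t-s)}$. Thus each column of $B_{t:s}$ is asymptotically constant.

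\emph{Increments.} We write
\[
B_{t+1:s}-B_{t:s}=(B_{t+1}-I)B_{t:s}.
\]
Since $(B_{t+1}-I)\1=0$, we may subtract from each column $y$ its mean $\bar y\1$. Lemma~\ref{lem:dobrushin}(i) then gives
\[
\norm{(B_{t+1}-I)y}_\infty\le (L+1)\norm{y^\perp}_\infty\le (L+1)\osc(y).
\]
This controls each column, so the $\infty$-norm of the whole matrix is at most $n$ times the column bound:
\[
\norm{B_{t+1:s}-B_{t:s}}_\infty\le n(L+1)Ce^{-c(t-s)}.
\]
These increments are summable in $t$. Hence $B_{t:s}\to F_s$, and summing the geometric tail gives \eqref{eq:rank-one-limit} with a new constant $C$ depending on $n,L,A_*$.

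\emph{Rank-one structure.} The oscillation of every column tends to zero, so every column of $F_s$ is a constant vector. Therefore $F_s=\1\nu_s$ for some row vector $\nu_s$. Next, $B_{t:s}\1=\1$ for all $t$, and passing to the limit gives $F_s\1=\1$, that is, $\nu_s\1=1$.

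\emph{Consistency.} We have $B_{t:s}=B_{t:s+1}B_s$. Letting $t\to\infty$ and using the convergence of $B_{t:s+1}$ together with the boundedness of the fixed matrix $B_s$ gives $F_s=F_{s+1}B_s$. Equivalently, $\1\nu_s=\1(\nu_{s+1}B_s)$. Comparing any row yields $\nu_s=\nu_{s+1}B_s$.

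The main obstacle is the second step, the increment bound. Dobrushin contraction controls only oscillations, not absolute sizes: $\norm{B_{t:s}}_\infty$ may grow, since $B_t$ need not be stochastic. So we cannot argue directly from norms. The remedy is to use the invariance $B\1=\1$ to kill the mean part of each column, so that only the oscillation, which decays exponentially, enters the increment. The constant $L$ from \eqref{eq:mainhyp} is used exactly there.
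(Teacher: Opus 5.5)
Your proof is correct and is essentially the paper's argument. Both work column by column, use $\osc(B_{t:s}e_k)\le D_{t,s}\le Ce^{-c(t-s)}$, cancel the mean part via $B_{t+1}\1=\1$ so that the increments are controlled by the column oscillations, and then sum the geometric tail. The paper phrases this step as a bound on $|m_{t+1}-m_t|$ and also cites Theorem~\ref{thm:deterministic}(2b) for existence of the limit, which your direct Cauchy estimate makes redundant.

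One small correction to your closing commentary: under \eqref{eq:mainhyp} the products cannot in fact grow. Your own summed increment bound gives $\sup_{t\ge s}\norm{B_{t:s}}_\infty\le L+C'$ uniformly in $s$; what is true is only that no such bound is available a priori, before the argument.
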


\begin{proof}
Fix $s$ and a basis vector $e_j$.  Let
\[
        Y_t=B_{t:s}e_j,
        \qquad t\ge s .
\]
By \eqref{eq:expdecay} and Lemma~\ref{lem:dobrushin},
\begin{equation}
        \osc(Y_t)\le Ce^{-c(t-s)} .
\end{equation}
Write $Y_t=m_t\1+R_t$, with $R_t\in H$.  The shifted version of
Theorem~\ref{thm:deterministic}(2b) applies because
\[
        \sum_{t\ge s}D_{t,s}<\infty,
        \qquad
        \sup_t\norm{B_t}_\infty<\infty .
\]
Thus $Y_t$ converges to a consensus vector, say $\lambda_{j,s}\1$.

The exponential rate follows from the scalar component as well as from the
oscillation.  Using $Y_{t+1}=B_{t+1}Y_t$ and $B_{t+1}\1=\1$,
\[
        |m_{t+1}-m_t|
        \le
        L\norm{R_t}_\infty+\norm{R_{t+1}}_\infty
        \le
        Ce^{-c(t-s)} .
\]
Summing the tail gives
\[
        |m_t-\lambda_{j,s}|\le Ce^{-c(t-s)},
        \qquad
        \norm{R_t}_\infty\le Ce^{-c(t-s)} .
\]
Therefore
\[
        \norm{B_{t:s}e_j-\lambda_{j,s}\1}_\infty
        \le
        Ce^{-c(t-s)} .
\]
Collecting the limits over $j$ gives a matrix $F_s$ with equal rows.  Write
$F_s=\1\nu_s$.  Since each $B_{t:s}$ has row sums one, also $F_s\1=\1$, hence
$\nu_s\1=1$.  Summing over the columns yields \eqref{eq:rank-one-limit}, after
changing $C$ by a dimension-dependent factor.

Finally,
\[
        F_s
        =
        \lim_{t\to\infty}B_{t:s}
        =
        \lim_{t\to\infty}B_{t:s+1}B_s
        =
        F_{s+1}B_s
        =
        \1(\nu_{s+1}B_s).
\]
Thus $\nu_s=\nu_{s+1}B_s$.\qedhere
\end{proof}

Such rank-one limits of backward products belong to the classical theory of
stochastic-matrix products; see, for example, \cite{Stenflo2008PerfectSampling}.

\section{Summable perturbations}\label{sec:summperturb}

We return to the full recursion \eqref{eq:linear-recursion}.  The oscillation
satisfies an exponentially weighted convolution estimate.

\begin{lemma}\label{lem:oscconv}
Under \eqref{eq:mainhyp}, for every $t\ge1$,
\begin{equation}\label{eq:osc-conv}
        \osc(X_t)
        \le
        Ce^{-ct}\osc(X_0)
        +
        2KC\sum_{s=1}^t e^{-c(t-s)}\norm{\gamma_s}_\infty .
\end{equation}
Consequently,
\begin{equation}\label{eq:osc-sum}
        \sum_{t\ge1}\osc(X_t)
        \le
        C\left(\osc(X_0)+\sum_{t\ge1}\norm{\gamma_t}_\infty\right)
\end{equation}
pointwise.  If the right-hand side belongs to $L^p$, then so does the left-hand side. Moreover, if $\Exp\osc(X_0)<\infty$, and $(\gamma_t)$ are identically distributed with $\Exp\norm{\gamma_1}_\infty<\infty$, then
\[
        \sup_{t\ge1}\Exp\osc(X_t)<\infty.
\]
\end{lemma}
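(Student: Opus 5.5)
The plan is to unroll the linear recursion \eqref{eq:linear-recursion} and apply the Dobrushin estimates of Lemma~\ref{lem:dobrushin} term by term. Iterating gives
\[
        X_t=B_{t:1}X_0+\sum_{s=1}^t B_{t:s+1}\mathcal{E}_s\gamma_s .
\]
The oscillation is subadditive, since $\osc(x+y)\le\osc(x)+\osc(y)$. We bound the two contributions separately.

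\textbf{Homogeneous part.} Lemma~\ref{lem:dobrushin}(ii),(iii) gives $\osc(B_{t:1}X_0)\le D_{t,1}\osc(X_0)$. By the remark after \eqref{eq:expdecay}, $D_{t,1}\le Ce^{-ct}$, since the extra factor $\delta(B_1)\le L$ is absorbed into $C$.

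\textbf{Driving terms.} For each $s$, Lemma~\ref{lem:dobrushin} and \eqref{eq:expdecay} give
\[
        \osc\bigl(B_{t:s+1}\mathcal{E}_s\gamma_s\bigr)\le D_{t,s+1}\osc(\mathcal{E}_s\gamma_s)\le Ce^{-c(t-s)}\cdot 2\norm{\mathcal{E}_s\gamma_s}_\infty .
\]
Here I use the elementary bound $\osc(y)\le 2\norm{y}_\infty$, valid for every $y$ and not only for $y\in H$. This is the one point to be careful about, since Lemma~\ref{lem:dobrushin}(i) is stated only on $H$. The inequality itself is trivial because $|y_i-y_j|\le|y_i|+|y_j|$. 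Then $\norm{\mathcal{E}_s\gamma_s}_\infty\le K\norm{\gamma_s}_\infty$. The case $s=t$ uses the convention $D_{t,t+1}=1$, which is consistent with $Ce^{0}\ge1$ after enlarging $C$ if needed. Summing over $s$ gives \eqref{eq:osc-conv}.

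\textbf{Summed bound \eqref{eq:osc-sum}.} I sum \eqref{eq:osc-conv} over $t\ge1$ and exchange the order in the double sum by Tonelli, since all terms are nonnegative:
\[
        \sum_{t\ge1}\sum_{s\le t}e^{-c(t-s)}\norm{\gamma_s}_\infty=\sum_{s\ge1}\norm{\gamma_s}_\infty\sum_{t\ge s}e^{-c(t-s)}\le (1-e^{-c})^{-1}\sum_s\norm{\gamma_s}_\infty ,
\]
and $\sum_t e^{-ct}<\infty$. This holds pointwise for every realization. The $L^p$ statement is then immediate by monotonicity of the $L^p$ norm, because the left-hand side is dominated pointwise by a constant times the right-hand side.

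\textbf{Uniform first-moment bound.} I take expectations in \eqref{eq:osc-conv}. Since the $\gamma_s$ are identically distributed, $\Exp\norm{\gamma_s}_\infty=\Exp\norm{\gamma_1}_\infty$. Therefore
\[
        \Exp\osc(X_t)\le C\,\Exp\osc(X_0)+2KC\,\Exp\norm{\gamma_1}_\infty\sum_{k\ge0}e^{-ck},
\]
which is uniform in $t$. No independence is needed here, only linearity of expectation.

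The main obstacle is minor bookkeeping. The exponential bound must be applied to $D_{t,s+1}$ rather than $D_{t,s}$, with the boundary term $s=t$ and the constants handled consistently.
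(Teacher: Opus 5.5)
Your proof is correct and is essentially the paper's argument. The paper iterates the scalar one-step inequality $\osc(X_t)\le\delta(B_t)\osc(X_{t-1})+2K\norm{\gamma_t}_\infty$, whereas you unroll the vector recursion first and invoke Lemma~\ref{lem:dobrushin}(iii); both reach the same intermediate bound $D_{t,1}\osc(X_0)+2K\sum_{s\le t}D_{t,s+1}\norm{\gamma_s}_\infty$ and then conclude identically via \eqref{eq:expdecay}, Tonelli, and identical distribution, and your remark that $\osc(y)\le 2\norm{y}_\infty$ holds for all $y$, not only $y\in H$, is exactly the fact the paper uses implicitly for $\osc(\mathcal{E}_t\gamma_t)$.
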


\begin{proof}
From \eqref{eq:linear-recursion} and Lemma~\ref{lem:dobrushin},
\[
        \osc(X_t)
        \le
        \delta(B_t)\osc(X_{t-1})
        +
        \osc(\mathcal{E}_t\gamma_t)
        \le
        \delta(B_t)\osc(X_{t-1})
        +
        2K\norm{\gamma_t}_\infty .
\]
Iteration and \eqref{eq:expdecay} give \eqref{eq:osc-conv}.  Summing
\eqref{eq:osc-conv} over $t$ and exchanging the order of summation gives
\eqref{eq:osc-sum}, since
\[
        \sum_{m\ge0}e^{-cm}<\infty .
\]
The $L^p$ assertion follows by monotonicity and the same deterministic bound.
The final assertion follows directly from \eqref{eq:osc-conv} and stationarity
of the one-dimensional expectations.\qedhere
\end{proof}

\begin{theorem}[Consensus with summable perturbations]\label{thm:summable}
Assume \eqref{eq:mainhyp} and $1 \le p \le \infty$.
\begin{enumerate}[label=(\arabic*)]
\item If $\sum_{t\ge1}\norm{\gamma_t}_\infty<\infty$ almost surely, then
      $X_t\to Z\1$ almost surely for some random variable $Z$.
\item If $X_0\in L^p$ and $\sum_{t\ge1}\norm{\gamma_t}_\infty$ converges in $L^p$, then
      $X_t\to Z\1$ in $L^p$ for some $Z\in L^p$.
\end{enumerate}
\end{theorem}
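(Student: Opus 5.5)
We will reuse the scalar/disagreement decomposition from the proof of Theorem~\ref{thm:deterministic}(2b), now applied to the perturbed recursion \eqref{eq:linear-recursion}. We write $X_t=m_t\1+R_t$ with $m_t=\bar X_t$ and $R_t=X_t^\perp\in H$. Since $B_t\1=\1$, we get
\[
        (m_t-m_{t-1})\1=B_tR_{t-1}-R_t+\mathcal{E}_t\gamma_t .
\]
Taking $\ell^\infty$ norms and using Lemma~\ref{lem:dobrushin}(i) gives the pathwise bound
\[
        |m_t-m_{t-1}|\le L\,\osc(X_{t-1})+\osc(X_t)+K\norm{\gamma_t}_\infty ,
\]
and also $\norm{R_t}_\infty\le\osc(X_t)$. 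The whole proof therefore rests on controlling $\sum_t\osc(X_t)$, which is exactly what Lemma~\ref{lem:oscconv} provides through \eqref{eq:osc-sum}.

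For (1), we work on the almost-sure event where $S:=\sum_t\norm{\gamma_t}_\infty<\infty$. Since $X_0$ is a finite vector, $\osc(X_0)<\infty$, so \eqref{eq:osc-sum} gives $\sum_t\osc(X_t)<\infty$. The displayed increment bound then shows $\sum_t|m_t-m_{t-1}|<\infty$, so $m_t\to Z$ pathwise. At the same time $\norm{R_t}_\infty\le\osc(X_t)\to0$, so $X_t\to Z\1$ almost surely. Measurability of $Z$ is immediate because it is a pointwise limit of measurable $m_t$ on an event of full measure; we set $Z=0$ off that event.

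For (2), we read ``converges in $L^p$'' as saying $S\in L^p$; if it only means that the partial sums converge in $L^p$, we first pass to $\sum_t\norm{\gamma_t}_\infty$, whose partial sums are monotone, so their $L^p$ limit is $S$ by monotone convergence. From $X_0\in L^p$ we have $\osc(X_0)\le 2\norm{X_0}_\infty\in L^p$. Lemma~\ref{lem:oscconv} then puts $W:=\sum_t\osc(X_t)$ in $L^p$, with $\norm{W}_p\le C(\norm{\osc(X_0)}_p+\norm{S}_p)$. We next bound the tails:
\[
        |m_t-Z|+\norm{R_t}_\infty\le\sum_{u>t}|m_u-m_{u-1}|+\osc(X_t)\le (L+2)\sum_{u\ge t}\osc(X_u)+K\sum_{u>t}\norm{\gamma_u}_\infty .
\]
This tail is dominated by $(L+2)W+KS\in L^p$ and tends to $0$ almost surely. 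Note that $S\in L^p$ forces $S<\infty$ a.s., so part (1) already gives the almost-sure limit $Z$. For $p<\infty$, dominated convergence yields $\Exp|m_t-Z|^p+\Exp\norm{R_t}_\infty^p\to0$, hence $X_t\to Z\1$ in $L^p$. Moreover $|Z|\le|m_0|+(L+2)W+KS$, so $Z\in L^p$.

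The main obstacle is the case $p=\infty$, where dominated convergence is unavailable. There we need the $L^\infty$ norms of the tails $\sum_{u\ge t}\osc(X_u)$ and $\sum_{u>t}\norm{\gamma_u}_\infty$ to vanish. For the $\gamma$-tail this is exactly the hypothesis that the series converges in $L^\infty$. For the oscillation tail we return to the convolution bound \eqref{eq:osc-conv} and sum it over $u\ge t$:
\[
        \sum_{u\ge t}\osc(X_u)\le Ce^{-ct}\osc(X_0)+C'\Bigl(e^{-ct/2}\sum_{s\le t/2}\norm{\gamma_s}_\infty+\sum_{s>t/2}\norm{\gamma_s}_\infty\Bigr).
\]
To get this we split the sources $s$ at $t/2$ and exchange summations; the resulting tail is uniformly small in $L^p$ for every $p$, including $p=\infty$. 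This uniform tail estimate would also handle $p<\infty$ directly, without needing dominated convergence.
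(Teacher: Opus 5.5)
Your proof is correct and follows the paper's own route. It uses the same decomposition $X_t=m_t\1+R_t$, the same increment bound \eqref{eq:mt-increment-bound}, and pathwise summability of $\osc(X_t)$ from Lemma~\ref{lem:oscconv}. Your part (2) is more careful than the paper's one-line ``take $L^p$ norms'' argument, especially for $p=\infty$: there you rightly observe that $S\in L^\infty$ alone would not suffice, and you use the split-at-$t/2$ tail estimate derived from \eqref{eq:osc-conv} together with the $L^\infty$-convergence of the series.
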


\begin{proof}
Write
\[
        X_t=m_t\1+R_t,
        \qquad
        R_t\in H .
\]
Since $B_t\1=\1$,
\begin{equation}
        (m_t-m_{t-1})\1
        =
        B_tR_{t-1}-R_t+\mathcal{E}_t\gamma_t .
\end{equation}
Hence
\begin{equation}\label{eq:mt-increment-bound}
\begin{aligned}
        |m_t-m_{t-1}|
        &\le
        L\norm{R_{t-1}}_\infty+\norm{R_t}_\infty
        +
        K\norm{\gamma_t}_\infty                         \\
        &\le
        L\osc(X_{t-1})+\osc(X_t)
        +
        K\norm{\gamma_t}_\infty .
\end{aligned}
\end{equation}
If $\sum_t\norm{\gamma_t}_\infty<\infty$ almost surely,
Lemma~\ref{lem:oscconv} gives
\[
        \sum_t\osc(X_t)<\infty
\]
almost surely.  Thus the right-hand side of \eqref{eq:mt-increment-bound} is
summable, $(m_t)$ is almost surely Cauchy, and
\[
        \norm{R_t}_\infty\le\osc(X_t)\to0 .
\]
This proves $X_t\to Z\1$ almost surely.

The $L^p$ proof is the same after taking $L^p$ norms in \eqref{eq:osc-sum} and
\eqref{eq:mt-increment-bound}: $\sum_t |m_t-m_{t-1}|$ converges in $L^p$, so
$(m_t)$ is Cauchy in $L^p$, and $R_t\to0$ in $L^p$.
\end{proof}

\section{Central limit theorem}

We give two proofs. The first keeps the full covariance matrix visible and is stable under vector-valued extensions; the second identifies the scalar coordinate responsible for the limiting fluctuation.

Assume now that $(\gamma_t)_{t\ge1}$ are i.i.d. with mean $\mu\in\R^n$ and
covariance matrix $\Sigma$, and that $X_0$ is deterministic.  For nonsummable i.i.d. perturbations, Theorem~\ref{thm:summable} no longer gives convergence. We therefore study the centered process at diffusive scale. Put $\xi_t:=\gamma_t-\mu$.

\begin{lemma}[Ces\`aro square control]\label{lem:cesaro-square-control}
Let $(a_s)_{s\ge1}$ be a nonnegative sequence.  If
\[
        \frac1t\sum_{s=1}^t a_s^2 \to A<\infty,
\]
then
\[
        \frac{a_t}{\sqrt t}\to0,
        \qquad
        \max_{s\le t}\frac{a_s}{\sqrt t}\to0 .
\]
In particular, if
\[
        \frac1t\sum_{s=1}^t\nu_s^\top\nu_s\to\mathcal C,
\]
then the Ces\`aro averages of $\norm{\nu_s}_2^2$ are bounded and
\[
        \max_{s\le t}\frac{\norm{\nu_s}_2}{\sqrt t}\to0 .
\]
The same conclusions hold with $t$ replaced by $t+1$ in the upper index.
\end{lemma}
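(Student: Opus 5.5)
The argument is elementary and uses only the Cesàro convergence. I would write $S_t:=\sum_{s=1}^t a_s^2$ and start from the identity
\[
        \frac{a_t^2}{t}
        =
        \frac{S_t}{t}-\frac{t-1}{t}\cdot\frac{S_{t-1}}{t-1}.
\]
Since $S_t/t\to A$ and $S_{t-1}/(t-1)\to A$, with $(t-1)/t\to1$, the right-hand side tends to $A-A=0$. Hence $a_t^2/t\to0$, and therefore $a_t/\sqrt t\to0$. Finiteness of $A$ is used exactly here: it is what makes the difference of the two Cesàro averages vanish.

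For the maximal statement I would run a standard two-regime argument. Fix $\eta>0$ and choose $s_0$ with $a_s\le\eta\sqrt s$ for all $s>s_0$. For $t>s_0$ I would split the maximum over $s\le t$ into two pieces:
\[
        \max_{s\le t}\frac{a_s}{\sqrt t}
        \le
        \frac{\max_{s\le s_0}a_s}{\sqrt t}
        +
        \max_{s_0<s\le t}\eta\sqrt{\frac st}
        \le
        \frac{\max_{s\le s_0}a_s}{\sqrt t}+\eta .
\]
The first term is a fixed finite number divided by $\sqrt t$, so it vanishes as $t\to\infty$. This gives $\limsup_t\max_{s\le t}a_s/\sqrt t\le\eta$, and since $\eta$ is arbitrary the limit is $0$. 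I do not expect a real obstacle; the only care needed is to state this $\eta$-splitting cleanly, because the pointwise limit does not by itself control the maximum.

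For the ``in particular'' part I would set $a_s:=\norm{\nu_s}_2$, which is nonnegative. Then $a_s^2=\nu_s\nu_s^\top$, the scalar obtained by contracting the row vector $\nu_s$ with itself. I read the hypothesis as convergence of the Cesàro averages of this scalar to $\mathcal C<\infty$. If instead $\nu_s^\top\nu_s$ is meant as the $n\times n$ outer-product matrix, I would take its trace: $\Tr(\nu_s^\top\nu_s)=\norm{\nu_s}_2^2$, and convergence of matrix averages implies convergence of their traces to $\Tr\mathcal C<\infty$. In either reading, the Cesàro averages of $\norm{\nu_s}_2^2$ converge, so they are bounded as a convergent sequence. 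The first part then gives $\max_{s\le t}\norm{\nu_s}_2/\sqrt t\to0$.

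Finally, replacing $t$ by $t+1$ in the upper index changes nothing, because $\sqrt{t+1}/\sqrt t\to1$ and $S_{t+1}/(t+1)$ is a subsequence-shift of the same convergent sequence. So the same conclusions hold with the index shifted.
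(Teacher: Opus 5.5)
Your proposal is correct and follows essentially the same route as the paper: the identity $a_t^2/t = S_t/t - \frac{t-1}{t}\,\frac{S_{t-1}}{t-1}$, then an $\varepsilon$-splitting of the maximum at a fixed index beyond which $a_s\le\varepsilon\sqrt s$, and finally taking traces of the Ces\`aro averages of the outer products $\nu_s^\top\nu_s$ for the ``in particular'' part. Your short argument for the shifted upper index $t+1$, using $\sqrt{(t+1)/t}\to1$, spells out a step that the paper leaves implicit.
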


\begin{proof}
Put $S_t:=\sum_{s=1}^t a_s^2$.  Since $S_t/t\to A$,
$
        \frac{a_t^2}{t}
        =
        \frac{S_t-S_{t-1}}{t}
        =
        \frac{S_t}{t}
        -
        \frac{t-1}{t}\frac{S_{t-1}}{t-1}
        \to0 .$
For the maximum, fix $\varepsilon>0$.  Choose $N$ so that
$a_s/\sqrt s\le\varepsilon$ for all $s\ge N$.  Then, for $t\ge N$,
\[
        \max_{s\le t}\frac{a_s^2}{t}
        \le
        \max\left\{
        \frac{\max_{s<N}a_s^2}{t},
        \varepsilon^2
        \right\} .
\]
Letting $t\to\infty$ and then $\varepsilon\downarrow0$ gives the second claim.
The final assertion follows by taking traces, because
\[
        \frac1t\sum_{s=1}^t\norm{\nu_s}_2^2
        =
        \Tr\!\left(\frac1t\sum_{s=1}^t\nu_s^\top\nu_s\right)
        \to\Tr(\mathcal C),
\]
and by the equivalence of finite-dimensional norms.
\end{proof}

\begin{theorem}[Fluctuation theorem]\label{thm:clt}
Assume \eqref{eq:mainhyp} and suppose that
\begin{equation}\label{eq:clt-hyp}
        \mathcal{E}_t\to\mathcal{E}_\infty,
        \qquad
        \frac1t\sum_{s=1}^t\nu_s^\top\nu_s\to \mathcal{C} .
\end{equation}
Then
\begin{equation}\label{eq:clt-limit}
        \frac{X_t-\Exp[X_t]}{\sqrt t}
        \Rightarrow
        N(0,\sigma^2J),
        \qquad
        \sigma^2
        :=
        \Tr\!\left(\mathcal{C}\mathcal{E}_\infty\Sigma\mathcal{E}_\infty^\top\right), \text{ and }J=\1\1^\top.
\end{equation}
The limiting Gaussian law is supported on the line $\R\1$.
\end{theorem}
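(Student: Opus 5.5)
Since $X_0$ is deterministic, iterating \eqref{eq:linear-recursion} gives the exact representation
\[
        X_t-\Exp[X_t]=\sum_{s=1}^t B_{t:s+1}\mathcal{E}_s\xi_s ,
\]
a sum of independent, mean-zero vectors. We will replace each $B_{t:s+1}$ by its rank-one limit $F_{s+1}=\1\nu_{s+1}$ from Proposition~\ref{prop:rankone}, writing
\[
        X_t-\Exp[X_t]=\1\sum_{s=1}^t \nu_{s+1}\mathcal{E}_s\xi_s+\sum_{s=1}^t\bigl(B_{t:s+1}-F_{s+1}\bigr)\mathcal{E}_s\xi_s=: \1 S_t+R_t .
\]
For the remainder, \eqref{eq:rank-one-limit} gives $\norm{B_{t:s+1}-F_{s+1}}_\infty\le Ce^{-c(t-s)}$, so by independence and $\norm{\mathcal{E}_s}_\infty\le K$,
\[
        \Exp\norm{R_t}_2^2\le C K^2\Tr(\Sigma)\sum_{s\le t}e^{-2c(t-s)}=O(1).
\]
Hence $R_t/\sqrt t\to0$ in $L^2$. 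By Slutsky, it suffices to show that the scalar $S_t/\sqrt t\Rightarrow N(0,\sigma^2)$, because $\1 N(0,\sigma^2)$ has law $N(0,\sigma^2J)$ and is supported on $\R\1$.

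The scalar sum $S_t$ is a triangular array of independent centered variables $\zeta_{t,s}:=\nu_{s+1}\mathcal{E}_s\xi_s/\sqrt t$. Its variance is
\[
        \frac1t\sum_{s=1}^t\nu_{s+1}\mathcal{E}_s\Sigma\mathcal{E}_s^\top\nu_{s+1}^\top=\Tr\Bigl(\frac1t\sum_{s=1}^t\nu_{s+1}^\top\nu_{s+1}\,\mathcal{E}_s\Sigma\mathcal{E}_s^\top\Bigr).
\]
Since $\mathcal{E}_s\to\mathcal{E}_\infty$, we replace $\mathcal{E}_s$ by $\mathcal{E}_\infty$ at cost
\[
        \frac1t\sum_s\norm{\nu_{s+1}}_2^2\,o(1),
\]
which vanishes because the Ces\`aro averages of $\norm{\nu_s}_2^2$ are bounded (Lemma~\ref{lem:cesaro-square-control}). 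The index shift $s+1$ versus $s$ changes the Ces\`aro average by one term of size $\norm{\nu_{t+1}}_2^2/t\to0$, again by Lemma~\ref{lem:cesaro-square-control} in its $t+1$ form. The variance therefore tends to $\Tr(\mathcal{C}\mathcal{E}_\infty\Sigma\mathcal{E}_\infty^\top)=\sigma^2$. If $\sigma^2=0$, Chebyshev finishes the argument.

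The main obstacle is the Lindeberg condition, since the weights $\nu_{s+1}$ need not be bounded. Bound
\[
        |\zeta_{t,s}|\le K\,\frac{\norm{\nu_{s+1}}_1}{\sqrt t}\,\norm{\xi_s}_\infty\le \eta_t\norm{\xi_s}_\infty ,
        \qquad
        \eta_t:=CK\max_{s\le t+1}\frac{\norm{\nu_s}_2}{\sqrt t}\to0,
\]
where $\eta_t\to0$ by Lemma~\ref{lem:cesaro-square-control}. Then, for fixed $\epsilon>0$,
\[
        \sum_s\Exp\bigl[\zeta_{t,s}^2\1_{\{|\zeta_{t,s}|>\epsilon\}}\bigr]\le \Bigl(\frac{K^2C}{t}\sum_s\norm{\nu_{s+1}}_2^2\Bigr)\,\Exp\bigl[\norm{\xi_1}_\infty^2\1_{\{\norm{\xi_1}_\infty>\epsilon/\eta_t\}}\bigr].
\]
This uses the identical distribution of the $\xi_s$. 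The first factor is bounded, and the second tends to $0$ by dominated convergence, since $\Exp\norm{\xi_1}^2<\infty$ (finite covariance) and $\epsilon/\eta_t\to\infty$. The Lindeberg--Feller theorem then gives $S_t/\sqrt t\Rightarrow N(0,\sigma^2)$, which completes the plan.
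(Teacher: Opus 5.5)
Your argument is correct apart from one small point below. It is a hybrid of the paper's two proofs rather than a copy of either.

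\textbf{Relation to the paper's proofs.} Your scalar $S_t=\sum_s\nu_{s+1}\mathcal{E}_s\xi_s$ is exactly $Y_t-\Exp Y_t$ for the paper's projection $Y_t=\nu_{t+1}X_t$. Your variance computation and Lindeberg bound coincide with the paper's second proof. The difference is the lift back to $\R^n$:
\begin{itemize}
\item The paper writes $X_t-Y_t\1=R_t-(\nu_{t+1}R_t)\1$, with $R_t$ the disagreement component. It controls this in $L^1$ through the uniform bound $\sup_t\Exp\osc(X_t)<\infty$ of Lemma~\ref{lem:oscconv}.
\item You apply the first proof's decomposition $B_{t:s+1}=F_{s+1}+W_{t,s}$ directly to the random sum. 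Orthogonality of the independent increments then gives $\Exp\norm{R_t}_2^2=O(1)$.
\end{itemize}
Your route is shorter than the paper's first proof: there are no covariance cross terms to estimate and no Gaussian replacement step, since Slutsky absorbs the remainder. It also never uses the consistency relation $\nu_s=\nu_{s+1}B_s$ or Lemma~\ref{lem:oscconv}. The paper's second proof, in turn, exhibits $\nu_{t+1}X_t$ as an exact scalar recursion, which is the conceptual content of the rank-one mechanism. Its lift uses only Dobrushin contraction of the state and first moments, not independence.

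\textbf{The point to fix.} For $s=t$ you have $B_{t:t+1}=I$. The estimate \eqref{eq:rank-one-limit} is stated only for $t\ge s$, so it does not bound $\norm{I-F_{t+1}}_\infty\le 1+\norm{\nu_{t+1}}_1$. Since you explicitly allow unbounded $\nu_s$, this term needs its own treatment. Either fix works:
\begin{itemize}
\item Split off the $s=t$ term, as the paper does. Its contribution to $\Exp\norm{R_t}_2^2$ is $O(1+\norm{\nu_{t+1}}_2^2)$, which is $o(t)$ by Lemma~\ref{lem:cesaro-square-control}. This still gives $R_t/\sqrt t\to0$ in $L^2$.
\item Observe that \eqref{eq:rank-one-limit} at $t=s$ gives $\norm{\nu_s}_1=\norm{F_s}_\infty\le\norm{B_s}_\infty+C\le L+C$. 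So the weights are in fact uniformly bounded, and your estimate holds for all $s\le t$ after enlarging $C$.
\end{itemize}
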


\begin{proof}[First proof: direct matrix proof]
From \eqref{eq:linear-recursion},
\begin{equation}
        X_t-\Exp[X_t]
        =
        \sum_{s=1}^t B_{t:s+1}\mathcal{E}_s\xi_s .
\end{equation}
Set $M_{t,s}:=B_{t:s+1}\mathcal{E}_s$. The covariance of the right-hand side is
\begin{equation}
        \Gamma_t
        :=
        \sum_{s=1}^t M_{t,s}\Sigma M_{t,s}^\top .
\end{equation}
We first prove that $t^{-1}\Gamma_t\to\sigma^2J$. The terminal term $s=t$ contributes only
$\mathcal{E}_t\Sigma\mathcal{E}_t^\top$ to $\Gamma_t$, which is $O(1)$ in $\ell^\infty$ norm and is negligible after division by $t$.  For $s<t$,
Proposition~\ref{prop:rankone} gives
\begin{equation}
        B_{t:s+1}=F_{s+1}+W_{t,s},
        \qquad
        F_{s+1}=\1\nu_{s+1},
        \qquad
        \norm{W_{t,s}}_\infty\le Ce^{-c(t-s)} .
\end{equation}
Thus
\begin{equation}
        \Gamma_t
        =
        \mathcal{E}_t\Sigma\mathcal{E}_t^\top
        +
        V_t+R_t^{(1)}+R_t^{(2)}+R_t^{(3)},
\end{equation}
where the four sums below run over $1\le s\le t-1$:
\begin{equation}
\begin{aligned}
V_t
&:=
\sum_{s=1}^{t-1}
F_{s+1}\mathcal{E}_s\Sigma\mathcal{E}_s^\top F_{s+1}^\top,
& R_t^{(1)}:=
\sum_{s=1}^{t-1}
W_{t,s}\mathcal{E}_s\Sigma\mathcal{E}_s^\top F_{s+1}^\top,
\\
R_t^{(2)}
&:=
\sum_{s=1}^{t-1}
F_{s+1}\mathcal{E}_s\Sigma\mathcal{E}_s^\top W_{t,s}^\top,
& R_t^{(3)}:=
\sum_{s=1}^{t-1}
W_{t,s}\mathcal{E}_s\Sigma\mathcal{E}_s^\top W_{t,s}^\top .
\end{aligned}
\end{equation}
The last term is bounded uniformly in $t$:
\[
        \norm{R_t^{(3)}}_\infty
        \le
        C\sum_{s=1}^{t-1}e^{-2c(t-s)}
        \le
        C .
\]
For the cross terms, using
\[
        \norm{F_{s+1}}_\infty\le n\norm{\nu_{s+1}}_\infty
        \qquad\text{and}\qquad
        \norm{M^\top}_\infty\le n\norm{M}_\infty,
\]
we obtain
\[
        \norm{R_t^{(1)}}_\infty+\norm{R_t^{(2)}}_\infty
        \le
        C\sum_{s=1}^{t-1} e^{-c(t-s)}\norm{\nu_{s+1}}_\infty .
\]
The Ces\`aro convergence in \eqref{eq:clt-hyp} and
Lemma~\ref{lem:cesaro-square-control} imply
\[
        \sup_{t\ge1}\frac1t\sum_{s=1}^t\norm{\nu_s}_\infty^2<\infty .
\]
Cauchy--Schwarz then gives
\[
\frac1t
\sum_{s=1}^{t-1}e^{-c(t-s)}\norm{\nu_{s+1}}_\infty
\le
\left(
\frac1t\sum_{s=1}^{t-1}e^{-c(t-s)}
\right)^{1/2}
\left(
\frac1t\sum_{s=1}^{t-1}e^{-c(t-s)}
\norm{\nu_{s+1}}_\infty^2
\right)^{1/2}
\to0 .
\]
Hence all three remainders, and also the terminal covariance
$\mathcal{E}_t\Sigma\mathcal{E}_t^\top$, vanish after division by $t$. It remains to identify $V_t$.  Since $F_{s+1}=\1\nu_{s+1}$,
\begin{equation}
        V_t
        =
        \sum_{s=1}^{t-1}
        \left(
        \nu_{s+1}\mathcal{E}_s\Sigma\mathcal{E}_s^\top\nu_{s+1}^\top
        \right)J .
\end{equation}
Let
\[
        \Lambda_s:=\mathcal{E}_s-\mathcal{E}_\infty .
\]
The elementary bound
\[
\left|
\nu_{s+1}\mathcal{E}_s\Sigma\mathcal{E}_s^\top\nu_{s+1}^\top
-
\nu_{s+1}\mathcal{E}_\infty\Sigma\mathcal{E}_\infty^\top\nu_{s+1}^\top
\right|
\le
C\norm{\Lambda_s}_\infty\norm{\nu_{s+1}}_\infty^2
\]
and the bounded Ces\`aro averages of $\norm{\nu_s}_\infty^2$ imply, by the usual
finite-initial-segment/tail split, that the Ces\`aro average of these differences converges to zero. The shifted and truncated Ces\`aro average has the same limit as the average in
the hypothesis.  Indeed, Lemma~\ref{lem:cesaro-square-control} gives
\[
        \norm{\nu_{t+1}}_\infty^2=o(t),
\]
and the fixed initial endpoint is negligible.  Thus the missing or added endpoint
terms are $o(t)$ after division by $t$.
Consequently,
\[
\frac1t\sum_{s=1}^{t-1}
\nu_{s+1}\mathcal{E}_\infty\Sigma\mathcal{E}_\infty^\top\nu_{s+1}^\top
=
\Tr\!\left[
\left(
\frac1t\sum_{s=1}^{t-1}\nu_{s+1}^\top\nu_{s+1}
\right)
\mathcal{E}_\infty\Sigma\mathcal{E}_\infty^\top
\right]\longrightarrow
\Tr\!\left(\mathcal{C}\mathcal{E}_\infty\Sigma\mathcal{E}_\infty^\top\right)
=\sigma^2 .
\]
Thus
\[
        \frac{\Gamma_t}{t}\to\sigma^2J .
\]
If the $\xi_s$ are Gaussian, then
$\frac{X_t-\Exp X_t}{\sqrt t}$
is centered Gaussian with covariance $\Gamma_t/t$, so the covariance convergence
proves the theorem in the Gaussian case.

For general i.i.d. perturbations, let $(Z_s)$ be i.i.d. centered Gaussian
vectors with covariance $\Sigma$, independent of $(\xi_s)$, and set
\[
        S_t:=\frac1{\sqrt t}\sum_{s=1}^tM_{t,s}\xi_s,
        \qquad
        T_t:=\frac1{\sqrt t}\sum_{s=1}^tM_{t,s}Z_s .
\]
Then $T_t\Rightarrow N(0,\sigma^2J)$ by the Gaussian case.  We show that $S_t$
and $T_t$ have the same limit.  For $s<t$ the rank-one estimate gives $\norm{M_{t,s}}_\infty
        \le
        C\bigl(\norm{\nu_{s+1}}_\infty+e^{-c(t-s)}\bigr)$, while for $s=t$ we only use the trivial bound
\[
        \norm{M_{t,t}}_\infty=\norm{\mathcal{E}_t}_\infty\le K .
\]
Therefore
\[
        \sup_t\frac1t\sum_{s=1}^t\norm{M_{t,s}}_\infty^2<\infty,
        \qquad
        q_t:=\max_{s\le t}\frac{\norm{M_{t,s}}_\infty}{\sqrt t}\to0 .
\]
The first bound follows from the bounded Ces\`aro averages of
$\norm{\nu_s}_\infty^2$.  For the second, Lemma~\ref{lem:cesaro-square-control}
gives $\max_{s\le t}\frac{\norm{\nu_{s+1}}_\infty}{\sqrt t}\to0$,  and the terminal term contributes only $K/\sqrt t$.

Let $\varphi\in C_b^3(\R^n)$ and define
\[
        R_\varphi(x,h)
        :=
        \varphi(x+h)-\varphi(x)-D\varphi(x)h
        -
        \tfrac12D^2\varphi(x)[h,h].
\]
Taylor's formula gives
\[
        |R_\varphi(x,h)|
        \le
        C_\varphi(\norm{h}_2^2\wedge\norm{h}_2^3),
\]
with $C_\varphi$ depending only on
$\norm{D^2\varphi}_\infty$ and $\norm{D^3\varphi}_\infty$.  For
$1\le s\le t$, put
\[
        U_{t,s}
        :=
        \frac1{\sqrt t}
        \left(
        \sum_{i<s}M_{t,i}Z_i+\sum_{i>s}M_{t,i}\xi_i
        \right).
\]
This vector is independent of both $\xi_s$ and $Z_s$.  The telescoping replacement identity gives
\[
\Exp\varphi(S_t)-\Exp\varphi(T_t)
=
\sum_{s=1}^t
\Exp\left[
\varphi\!\bigl(U_{t,s}+t^{-1/2}M_{t,s}\xi_s\bigr)
-
\varphi\!\bigl(U_{t,s}+t^{-1/2}M_{t,s}Z_s\bigr)
\right].
\]
Because $\xi_s$ and $Z_s$ have the same mean and covariance, the Taylor terms of
order $0,1,2$ cancel after conditioning on $U_{t,s}$.  Therefore, for each fixed
$\varepsilon>0$, splitting the expectations according to $\{\|\xi_s\|_2\ge \varepsilon/q_t\}$ and $\{\|Z_s\|_2\ge \varepsilon/q_t\}$, the coefficient bounds and the definition of $q_t$ yield
\[
\begin{aligned}
|\Exp\varphi(S_t)-\Exp\varphi(T_t)|
&\le
C\varepsilon
\frac1t\sum_{s=1}^t\norm{M_{t,s}}_\infty^2
\bigl(\Exp\norm{\xi_1}_2^2+\Exp\norm{Z_1}_2^2\bigr)
\\
&\quad+
C\frac1t\sum_{s=1}^t\norm{M_{t,s}}_\infty^2
\left(
\Exp\!\left[
\norm{\xi_1}_2^2
\mathbf1_{\{\norm{\xi_1}_2>\varepsilon/q_t\}}
\right]
+
\Exp\!\left[
\norm{Z_1}_2^2
\mathbf1_{\{\norm{Z_1}_2>\varepsilon/q_t\}}
\right]
\right).
\end{aligned}
\]
Letting $t\to\infty$ sends the second line to zero by dominated convergence and
the first coefficient bound.  Then $\varepsilon\downarrow0$ sends the first line
to zero.  Thus
\[
        \Exp\varphi(S_t)-\Exp\varphi(T_t)\to0
\]
for every $\varphi\in C_b^3(\R^n)$; a standard smoothing argument makes this
class convergence determining.  Hence
\[
        S_t\Rightarrow N(0,\sigma^2J).
\]
\emph{Second proof: scalar projection.}
Define
\[
        Y_t:=\nu_{t+1}X_t .
\]
By Proposition~\ref{prop:rankone}, $ \nu_s=\nu_{s+1}B_s$. Hence, from \eqref{eq:linear-recursion},
\[
        Y_t
        =
        \nu_{t+1}B_tX_{t-1}
        +
        \nu_{t+1}\mathcal{E}_t\gamma_t
        =
        \nu_tX_{t-1}
        +
        \nu_{t+1}\mathcal{E}_t\gamma_t .
\]
Iterating gives the exact scalar representation
\begin{equation}
        Y_t
        =
        \nu_1X_0
        +
        \sum_{s=1}^t\nu_{s+1}\mathcal{E}_s\gamma_s .
\end{equation}
Consequently,
\begin{equation}
        \frac{Y_t-\Exp Y_t}{\sqrt t}
        =
        \sum_{s=1}^t\eta_{t,s},
        \qquad
        \eta_{t,s}
        :=
        t^{-1/2}\nu_{s+1}\mathcal{E}_s\xi_s .
\end{equation}
The variance converges by the same shifted-Ces\`aro argument used in the first
proof:
\[
        \sum_{s=1}^t\operatorname{Var}(\eta_{t,s})
        =
        \frac1t\sum_{s=1}^t
        \nu_{s+1}\mathcal{E}_s\Sigma\mathcal{E}_s^\top\nu_{s+1}^\top
        \longrightarrow
        \Tr\!\left(\mathcal{C}\mathcal{E}_\infty\Sigma\mathcal{E}_\infty^\top\right)
        =
        \sigma^2 .
\]
For Lindeberg, set
\[
        a_{t,s}
        :=
        t^{-1/2}\mathcal{E}_s^\top\nu_{s+1}^\top
        \in\R^n,
        \qquad
        \eta_{t,s}=a_{t,s}^\top\xi_s .
\]
By Lemma~\ref{lem:cesaro-square-control},
\[
        \alpha_t
        :=
        \max_{s\le t}\norm{a_{t,s}}_2
        \le
        C t^{-1/2}\max_{s\le t}\norm{\nu_{s+1}}_\infty
        \to0 ,
\]
and the Ces\`aro convergence in \eqref{eq:clt-hyp} gives
$\sum_{s\le t}\norm{a_{t,s}}_2^2=O(1)$. Therefore, for every $\varepsilon>0$,
\[
        \sum_{s=1}^t
        \Exp\!\left[
        \eta_{t,s}^2\mathbf1_{\{|\eta_{t,s}|>\varepsilon\}}
        \right]
        \le
        \sum_{s=1}^t\norm{a_{t,s}}_2^2\,
        \Exp\!\left[
        \norm{\xi_1}_2^2
        \mathbf1_{\{\norm{\xi_1}_2>\varepsilon/\alpha_t\}}
        \right]
        \to0 .
\]
The Lindeberg--Feller theorem yields
\[
        \frac{Y_t-\Exp Y_t}{\sqrt t}
        \Rightarrow
        N(0,\sigma^2).
\]

It remains to lift the scalar limit back to $\R^n$.  Decompose
\[
        X_t=m_t\1+R_t,
        \qquad R_t\in H .
\]
Since $\nu_{t+1}\1=1$,
\[
        X_t-Y_t\1
        =
        R_t-(\nu_{t+1}R_t)\1 .
\]
By Lemma~\ref{lem:oscconv}, finite second moments of $\gamma_1$ imply
\[
        \sup_t\Exp\norm{R_t}_\infty<\infty .
\]
Moreover, Lemma~\ref{lem:cesaro-square-control} gives
\[
        \frac{\norm{\nu_{t+1}}_\infty}{\sqrt t}\to0 .
\]
Thus
\[
        \frac1{\sqrt t}
        \Exp\norm{X_t-Y_t\1}_\infty
        \le
        \frac{C}{\sqrt t}
        \bigl(1+n\norm{\nu_{t+1}}_\infty\bigr)
        \to0 .
\]
The same bound applies to the centered difference, with an extra harmless factor
$2$.  Hence
\[
        \frac{X_t-\Exp X_t}{\sqrt t}
        -
        \frac{Y_t-\Exp Y_t}{\sqrt t}\,\1
        \to0
        \quad\text{in }L^1\text{ and therefore in probability}.
\]
Slutsky's theorem gives \eqref{eq:clt-limit}.
\end{proof}

\begin{remark}
The first proof provides the full covariance matrix. The second proof shows the limit is a multiple of $J$ because Dobrushin contraction makes disagreement negligible, leaving only the scalar projection $\nu_{t+1}X_t$. The asymptotic variance aggregates rank-one terms through a Cesàro average, capturing the full trajectory of the limiting row weights $\nu_s$. Equivalently, the limiting Gaussian law resides on $\operatorname{Range}(F_t)=\mathbb{R}\1$.
\end{remark}


\begin{remark}[Pairwise Dobrushin geometry and dynamic agreement classes]
\label{rem:dynamic-classes}
The coefficient $\delta(B_{t:s})$ used above is obtained by taking the worst
row-pair distance.  If this maximum is not taken, one obtains a finer geometry
which can detect emergent agreement classes.  To do this, for $t\ge s$, set
\[
        Q_{t,s}:=B_{t:s},
        \qquad
        Q_{t,t+1}:=I,
\]
and define the pairwise Dobrushin distance
\[
        \Theta_{ij}(t,s)
        :=
        \frac12\sum_{k=1}^n
        \left|(Q_{t,s})_{ik}-(Q_{t,s})_{jk}\right|.
\]
Equivalently, since the rows of $Q_{t,s}$ have the same sum,
$$\Theta_{ij}(t,s) = \sup_{\osc(x)\le 1}\left|(Q_{t,s}x)_i-(Q_{t,s}x)_j\right|. 
$$
Thus $\delta(Q_{t,s})= \max_{1\le i,j\le n}\Theta_{ij}(t,s)$. For each initial time $s$, define
\[
        i\sim_s j
        \qquad\Longleftrightarrow\quad
        \Theta_{ij}(t,s)\longrightarrow0
        \quad\text{as }t\to\infty .
\]
The triangle inequality for the half-$\ell^1$ distance between rows shows
that $\sim_s$ is an equivalence relation.  Its equivalence classes are the
dynamically generated agreement classes: agents in the same class asymptotically
agree for every initial condition, while agents in different classes are not
forced to agree by the linear dynamics alone.  This is close in spirit to
cluster consensus and class-ergodicity for products of stochastic matrices; see
\cite{HanLuChen2013,TouriNedic2011,TouriNedicClasses}. The perturbed recursion satisfies the pairwise estimate
\[
        |X_t^i-X_t^j|
        \le
        \Theta_{ij}(t,1)\osc(X_0)
        +
        2K\sum_{\ell=1}^t
        \Theta_{ij}(t,\ell+1)\norm{\gamma_\ell}_\infty .
\]
Thus a pairwise analogue of \eqref{eq:mainhyp} is
\[
        A^*_{ij}
        :=
        \sup_{t\ge1}
        \sum_{\ell=1}^t
        \Theta_{ij}(t,\ell+1)
        <\infty .
\]
Under the global hypothesis \eqref{eq:mainhyp}, for every fixed initial time $s$,
\[
        \max_{i,j}\Theta_{ij}(t,s)
        \le
        \delta(B_{t:s})
        \le
        D_{t,s}
        \le
        Ce^{-c(t-s)} .
\]
Hence, for each fixed $s$, all pairwise row distances vanish exponentially as
$t\to\infty$, and the dynamically generated agreement relation has a single
class.  Proposition~\ref{prop:rankone} then gives the rank-one limit
$F_s=\1\nu_s$.  More generally, if the backward products converge to matrices
$F_s$ whose rows are equal within, but not necessarily across, the classes of
$\sim_s$, then the rank-one approximation $B_{t:s}\simeq \1\nu_s$ is replaced
by a block-row approximation $B_{t:s}\simeq F_s$.  In such a block-consensus
regime, the natural limiting fluctuations live in the associated block-consensus
subspace, rather than on the one-dimensional line $\operatorname{span}\{\1\}$.
The present paper treats the one-class case, for which $F_s=\1\nu_s$ and
$\operatorname{Range}(F_s)=\operatorname{span}\{\1\}$.
\end{remark} 



\begin{thebibliography}{99}

\bibitem{AAAP2025}
G. Amir, I. Arieli, G. Ashkenazi-Golan and R. Peretz.
Granular DeGroot dynamics -- a model for robust naive learning in social networks.
\emph{Journal of Economic Theory} \textbf{223} (2025), 105952.

\bibitem{ChatterjeeSeneta1977Consensus}
S. Chatterjee and E. Seneta.
Towards consensus: Some convergence theorems on repeated averaging.
\emph{Journal of Applied Probability} \textbf{14} (1977), 89--97.

\bibitem{degroot1974reaching}
M. H. DeGroot.
Reaching a consensus.
\emph{Journal of the American Statistical Association} \textbf{69} (1974),
118--121.

\bibitem{Dobrushin1956NonstationaryI}
R. L. Dobrushin.
Central limit theorem for nonstationary Markov chains. I.
\emph{Theory of Probability and its Applications} \textbf{1} (1956), 65--80.

\bibitem{Dobrushin1956NonstationaryII}
R. L. Dobrushin.
Central limit theorem for nonstationary Markov chains. II.
\emph{Theory of Probability and its Applications} \textbf{1} (1956), 329--383.

\bibitem{HanLuChen2013}
Y. Han, W. Lu and T. Chen.
Cluster consensus in discrete-time networks of multiagents with inter-cluster nonidentical inputs.
\emph{IEEE Transactions on Neural Networks and Learning Systems}
\textbf{24} (2013), 566--578.

\bibitem{popescu2023averaging}
I. Popescu and T. Vaidya.
Averaging plus learning models and their asymptotics. \emph{Proceedings of the Royal Society A: Mathematical, Physical and Engineering Sciences} \textbf{479} (2275) (2023), 20220681.

\bibitem{popescu2026mixednorms}
I. Popescu, J. Syatriadi and T. Vaidya. 
Anchoring and Mixed-Norm Contractions in Averaging-Learning Dynamics. arXiv preprint arXiv:2602.22627, (2026).

\bibitem{SethuramanVaradhan2005Dobrushin}
S. Sethuraman and S. R. S. Varadhan.
A martingale proof of Dobrushin's theorem for non-homogeneous Markov chains.
\emph{Electronic Journal of Probability} \textbf{10} (2005), 1221--1235.

\bibitem{Stenflo2008PerfectSampling}
\"O. Stenflo.
Perfect sampling from the limit of deterministic products of stochastic
matrices.
\emph{Electronic Communications in Probability} \textbf{13} (2008), 474--481.

\bibitem{TouriNedic2011}
B. Touri and A. Nedi\'c.
On ergodicity, infinite flow, and consensus in random models.
\emph{IEEE Transactions on Automatic Control} \textbf{56} (2011), 1593--1605.

\bibitem{TouriNedicClasses}
B. Touri and A. Nedi\'c.
On approximations and ergodicity classes in random chains.
\emph{IEEE Transactions on Automatic Control} \textbf{57} (2012), 2718--2730.



\end{thebibliography}
\end{document}